\documentclass[11pt]{article}

\usepackage{amsmath}
\usepackage{amsthm}
\usepackage{amsfonts}
\usepackage{amssymb}
\usepackage{amsmath,amscd}

\setlength{\paperwidth}{21cm}
\setlength{\paperheight}{29.7cm}
\setlength{\evensidemargin}{0cm}
\setlength{\oddsidemargin}{0cm}
\setlength{\topmargin}{0cm}
\setlength{\headsep}{0cm}
\setlength{\headheight}{0cm}
\setlength{\textheight}{24cm}
\setlength{\textwidth}{16cm}

\newcommand{\Z}{\mathbb{Z}}
\newcommand{\N}{\mathbb{N}}
\newcommand{\PP}{\mathbb{P}}
\newcommand{\RR}{\mathbb{R^{+}}}
\newcommand{\Bl}{\operatorname{Bl}}

\newcommand{\Pic}{\operatorname{Pic}}
\newcommand{\NE}{\operatorname{NE}}
\newcommand{\NNE}{\overline{\operatorname{NE}}}

\newcommand{\cont}{\operatorname{cont}}
\newcommand{\Exc}{\operatorname{Exc}}

\newcommand{\Supp}{\operatorname{Supp}}
\newcommand{\Ratcurve}{\operatorname{RatCurves}^{n}}
\newcommand{\OO}{{\cal O}}
\newcommand{\EE}{{\cal E}}

\newcommand{\wtilde}{\widetilde}

\newtheorem{theo}{Theorem}
\newtheorem{prop}{Proposition}
\newtheorem{lem}{Lemma}

\newtheorem{claim}{Claim}

\title
{On the minimal length of extremal rays for 
Fano 4-folds}
\author{Toru Tsukioka}

\begin{document}

\maketitle

\begin{abstract}
The minimum of intersection numbers of the anti-canonical 
divisor with rational curves on a Fano manifold is called pseudo-index. 
It is expected that the intersection number of anti-canonical divisor 
attains to the minimum on an extremal ray, i.e. there exists an extremal rational curve 
whose intersection number with the anti-canonical divisor 
equals the pseudo-index. In this note, we prove this 
for smooth Fano 4-folds having birational contractions.
\end{abstract}

\section{Introduction}
Let $X$ be a Fano manifold, i.e. a smooth projective variety such that the 
anti-canonical divisor $-K_{X}$ is ample. 
Throughout this paper, algebraic varieties are defined over 
the field of complex numbers.

The {\it index} $r(X)$ and the {\it pseudo-index} $i(X)$ of a Fano manifold $X$ is defined 
respectively as
$$
r(X):=\max \{ m\in \N \mid -K_{X}=mH \ \mbox{for some }H\in\Pic (X) \}, \\ 
$$
$$
i (X):=\min \{ -K_{X}\cdot \Gamma \mid \Gamma \mbox{ is a rational curve on } X \}. 
$$
By definition, the positive integer $i(X)$ is a multiple of $r(X)$. 
The equality $r(X)=i(X)$ does not hold in general. 
For example, if $X=\PP^{a}\times\PP^{b}$, 
then we have $r(X)=$gcd$(a+1,b+1)$ while $i(X)=\min (a+1,b+1)$.  
On the other hand,  when $\rho(X)=1$, we do not know whether 
the equality holds or not (see \cite{KollarBook} p. 248 Problem 1.13). 

In \cite{WisManu}, J. A. Wi\'sniewski observed that the pseudo-index is well adapted to 
the study of Fano manifolds with Picard number greater than or equal to 2,  
since it is used to give a lower bound for the dimension of 
the deformation space of rational curves. However,  
in view of the fact that the geometric structure of 
Fano manifolds is governed by its extremal rays, 
it is essential to consider not all rational curves 
but only the extremal rational curves. So, we define another 
invariant $\ell (X)$  as follows.

Recall that the {\it length} of an extremal ray $R\subset \NNE(X)$ 
is defined by   
$$
\ell (R):=\min \{ -K_{X}\cdot \Gamma\ |\ [\Gamma]\in R \}. 
$$   
Note that the Kleiman-Mori cone of a Fano manifold 
is generated by a finite number of extremal rays.  
We define the {\it minimal length} of extremal rays for  
a Fano manifold $X$ as 
$$
\ell (X):=\min \{\,  \ell (R) \mid R \mbox{ is an extremal ray of } \NNE(X) \} 
$$
That is, the positive integer $\ell (X)$ is the minimal anti-canonical degree among 
all extremal rational curves on $X$. Clearly, we have $\ell (X) \geq i(X)$. 
A natural problem is the following:

\

{\bf Problem}: Do we have $i(X)=\ell (X)$ for any Fano manifold $X$ ?

\

The purpose of this note is to give an affirmative answer 
in a special case:

\begin{theo}\label{main}
Let $X$ be a smooth Fano 4-fold. 
Assume that $X$ has a birational contraction. 
Then, we have $i(X)=\ell (X)$.
\end{theo}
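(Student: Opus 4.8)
The inequality $\ell(X)\ge i(X)$ is immediate from the definitions, so the whole content is the reverse inequality, i.e. producing an extremal ray $R$ with $\ell(R)=i(X)$. Since $X$ carries a birational contraction it is not of Picard number one, so $\rho(X)\ge 2$. The plan is to first cut down the possibilities for $i(X)$: the generalized Mukai inequality $\rho(X)\,(i(X)-1)\le\dim X=4$, known in dimension $4$, gives $i(X)\le 2$ once we discard the boundary cases $\PP^{2}\times\PP^{2}$ and $(\PP^{1})^{4}$, neither of which admits a birational contraction. Thus $i(X)\in\{1,2\}$. Next I would dispose of small contractions using Wi\'sniewski's inequality $\dim\Exc(\varphi)+\dim F\ge \dim X+\ell(R)-1$ for a nontrivial fiber $F$: if some extremal ray $R$ has a small contraction, then $\dim\Exc\le 2$ and $\dim F\le\dim\Exc$, so $\ell(R)\le 2\dim\Exc-3\le 1$, whence $\ell(X)=1=i(X)$ and we are done. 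Therefore we may assume every extremal contraction of $X$ is either of fiber type or divisorial.

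Fix a divisorial contraction $\varphi_{R_{0}}\colon X\to Y$ with exceptional divisor $E$ and length $\ell_{0}=\ell(R_{0})$. Here $\dim E=3$ and $\dim F\le 3$, so Wi\'sniewski's inequality forces $\ell_{0}\le 3$. The extremal case $\ell_{0}=3$ forces $\dim F=3$ and, by Ando--Wi\'sniewski, $\varphi_{R_{0}}$ is the blow-up of a smooth point with $E\cong\PP^{3}$ and $\OO_{E}(E)=\OO_{\PP^{3}}(-1)$; the case $\ell_{0}=2$ falls under the classification of length-two divisorial contractions of smooth $4$-folds, where $\varphi_{R_{0}}(E)$ is a point or a curve and the fibers are del Pezzo surfaces. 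If $\ell_{0}\le i(X)$ then $\ell(X)\le\ell_{0}\le i(X)\le\ell(X)$ and equality holds, so the remaining configurations are exactly $(i(X),\ell_{0})\in\{(1,2),(1,3),(2,3)\}$, in each of which $i(X)<\ell_{0}$.

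For the main step, choose a rational curve $\Gamma$ with $-K_{X}\cdot\Gamma=i(X)$; since $i(X)\le 2$, its deformation family $V$ is unsplit, because a curve of $-K_{X}$-degree at most $2$ cannot specialize to a connected effective $1$-cycle of strictly smaller positive total degree. As $-K_{X}\cdot\Gamma=i(X)<\ell_{0}$, we have $[\Gamma]\notin R_{0}$. In the cases $\ell_{0}=3$ adjunction gives $-K_{X}|_{E}=\OO_{\PP^{3}}(3)$, so every curve inside $E$ has $-K_{X}$-degree divisible by $3$; hence $\Gamma\not\subseteq E$ and $E\cdot\Gamma\ge 0$, and one treats the $\ell_{0}=2$ case by the analogous computation of $-K_{X}|_{E}$ on the del Pezzo fibers. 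The goal is to show that the minimal face of $\NNE(X)$ containing $[\Gamma]$ is a single extremal ray $R$, for then $\ell(X)\le\ell(R)\le-K_{X}\cdot\Gamma=i(X)$, finishing the proof. I would do this by analyzing the locus $\Lieu(V)$ swept out by $V$, using the estimate $\dim\Lieu_{x}(V)\ge -K_{X}\cdot\Gamma-1$ together with chain-connectedness to bound the dimension of that face, and by running the two-ray game between $R_{0}$ and the face spanned by $V$; the divisorial (not small) structure of $\varphi_{R_{0}}$ and the explicit description of $E$ are what control how a $\Gamma$ with $E\cdot\Gamma\ge 0$ meets the contracted locus and prevent $[\Gamma]$ from lying in the strict interior of a higher-dimensional face.

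The main obstacle is precisely this last step. A minimal rational curve need not, a priori, span an extremal ray: one can have $[\Gamma]=\sum a_{i}[C_{i}]$ with extremal generators $C_{i}$ of length $\ge i(X)+1$ and $\sum a_{i}<1$, so no numerical inequality alone produces a low-length ray. The hypothesis of a birational contraction is used exactly to break this ambiguity, by confronting the unsplit family $V$ with the divisor $E$ and its fibration and extracting a new extremal ray of length $\le i(X)$ from the two-ray game. I expect the bulk of the argument, and the reason the statement is confined to $4$-folds possessing a birational contraction, to be the finite but delicate case-check over the pairs $(E,\varphi_{R_{0}})$ occurring for $\ell_{0}\in\{2,3\}$, each requiring its own verification that $V$ forces an extremal ray of the desired length.
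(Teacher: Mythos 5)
Your preliminary reductions are sound as far as they go: $\ell(X)\ge i(X)$ is trivial, a small contraction on a 4-fold forces $\ell(X)=1$ via the Fujita--Ionescu--Wi\'sniewski inequality, a divisorial ray has length at most $3$ with the length-$3$ case being the point blow-up, and (granting the generalized Mukai inequality in dimension $4$, a much heavier external input than anything the paper uses) $i(X)\le 2$ once the boundary products are excluded. But the proof stops exactly where the theorem begins. The whole content of the statement is the production of an extremal ray of length $\le i(X)$ in the residual configurations $(i(X),\ell_0)\in\{(1,2),(1,3),(2,3)\}$, and for that step you offer only a programme --- ``analyzing the locus $\Lieu(V)$ \dots\ running the two-ray game \dots\ I expect the bulk of the argument to be a finite but delicate case-check'' --- without carrying out the argument in even one case. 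You correctly identify the obstruction yourself (a minimal rational curve need not span an extremal ray, and no numerical inequality alone produces a low-length ray), but you do not overcome it. Moreover the intermediate goal you set, that the minimal face of $\NNE(X)$ containing $[\Gamma]$ be a single extremal ray, is stronger than what is needed and is not what is actually established by any argument here; what one needs is \emph{some} extremal ray of length $\le i(X)$, possibly generated by a curve having nothing to do with $\Gamma$.

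For comparison, the paper closes this gap by two concrete mechanisms, neither of which appears in your sketch. When $E$ is contracted to a point, Wi\'sniewski's structure theorem produces a second extremal ray $R'$ with $E\cdot R'>0$ whose contraction is a conic bundle with singular fibers or a codimension-two blow-up (whence $\ell(R')=1$) or a $\PP^{1}$-bundle; in the last case the formula $K_{X}=-2E+\varphi^{*}(K_{Z}+\varphi_{*}N_{E/X})$, together with the ampleness of $-K_{Z}$ and of $N_{E/X}^{*}$, gives $-K_{X}\cdot\Gamma_{0}\ge 2$ for every rational curve not contained in $E$, contradicting $i(X)=1$. When $E$ is contracted to a curve, the paper classifies all possible pairs $(Y,C)$ (Propositions \ref{CR} and \ref{p1}, which rest on the unsplit-family statement Proposition \ref{Ef=1} and on external classification results), reducing to four explicit varieties on which $i(X)=\ell(X)$ is verified by writing $-K_{X}$ against explicit divisors. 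Without an argument of comparable specificity in each of your residual cases, the proposal does not prove the theorem.
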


Note that in some cases, the equality $i(X)=\ell (X)$ is easily 
verified:
\begin{itemize} 
\item When $\rho(X)=1$, the equality is obvious, since (the numerical class of) any curve on $X$ generates the extremal ray.  
\item If there is an extremal ray $R\subset \NNE(X)$ such that 
$\ell (R)=1$, then clearly $i(X)=\ell (X)=1$.
\item If $X$ is a toric Fano manifold, the equality follows from the fact that 
any curve on $X$ is numerically equivalent to a linear combination of T-invariant curves with natural number coefficients (see the proof of Proposition 2.26 in \cite{Oda}).
\end{itemize}
{\it Remark.} Concerning the last observation on the toric case, 
a similar statement is expected in general.  
For simplicity, we consider a Fano manifold $X$ with $\rho(X)=2$. 
Then the Kleiman-Mori cone is generated by two extremal rays: 
$$
\overline{\NE}(X)=R_{1}+R_{2}.
$$
Let $f_{i}$ be the minimal rational curve of the extremal ray $R_{i}$, 
i.e. we assume that $-K_{X}\cdot f_{i}=\ell (R_{i})$ for $i=1,2$. 

\

{\bf Question}: Let $\Gamma$ be an irreducible curve on $X$. 
Do there exist positive {\it integers} $a_{1}$ and  $a_{2}$ such that 
the 1-cycle $a_{1}f_{1}+a_{2}f_{2}$ is numerically equivalent to 
$\Gamma$ ?

\

The affirmative answer gives the equality $i(X)=\ell (X)$. Indeed, 
if $\Gamma_{0}\subset X$ is a rational curve such that 
$-K_{X}\cdot\Gamma_{0}=i(X)$, we write 
$\Gamma_{0}\equiv a_{1}f_{1}+a_{2}f_{2}$ with 
$a_{1},a_{2}\in \N$, and we get 
$$
i(X)=-K_{X}\cdot \Gamma_{0}=
a_{1}(-K_{X}\cdot f_{1})+a_{2}(-K_{X}\cdot f_{2})\geq 
\min \{ \ell(R_{1}), \ell(R_{2}) \}=\ell (X).
$$ 
In dimension three, the answer to the question is affirmative by \cite{MM} Proposition 6. 
The proof depends on numerical arguments on the intersection numbers of divisors on 3-folds, and seems difficult to apply it to higher dimensions.    
In this note, we treat only the problem of the equality $i(X)=\ell (X)$ 
in a direct way using classification results of Fano 4-folds.

\

The present note is organized as follows. Section \ref{ratcurve} is devoted 
to show a preliminary result based on the bend-and-break lemma. 
In Section \ref{divtopt}, we prove the equality $i(X)=\ell (X)$ 
when $X$ has a birational contraction sending a divisor to a point. 
Section \ref{classification} gives a partial classification of Fano manifolds with $\ell (X)\geq 2$, which is necessary to our purpose. 
The proof of Theorem \ref{main} is done in Section \ref{proof} using 
the results of Section \ref{divtopt} and \ref{classification}.

\

 {\it Notation and conventions.}
The blow-up of a variety $Y$ along a subvariety $C$ is denoted by $\Bl_{C}(Y)$.
 We denote by $Q_{k}$ a smooth hyperquadric in $\PP^{k+1}$. 
For a Cartier divisor $E$ on a variety $X$ and an extremal ray $R\subset \overline{\NE}(X)$, 
the notation $E\cdot R>0$
means that $E\cdot \alpha>0$ 
for some $\alpha\in R$ (hence for any $\alpha\in R\setminus\{ 0 \}$). 
For a vector bundle $\EE$, 
we denote $\PP(\EE)$ the Grothendieck's projectivization.

\section{Unsplit family of rational curves}\label{ratcurve}
For the classification of Fano manifolds, it is important to compute the 
intersection number of extremal rational curves with special divisors.  
In this section, we prove a proposition on the intersection of rational curves 
with the exceptional divisor of a divisorial contraction. 
 
The following lemma is well known but we include the proof for the reader's convenience.

\begin{lem}\label{bendandbreak}
Let $q:S\to B$ be a ruled surface over an irreducible curve $B$. 
Assume that there exists a morphism $p:S\to S'$ such that 
$\dim S'=2$. Let $D$ be an effective divisor on $S$. 
If $\dim p (\Supp (D))=0$, then $\Supp (D)$ is a section 
of $q$.
\end{lem}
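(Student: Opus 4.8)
The plan is to avoid deformation arguments altogether and instead read the conclusion off from the intersection theory of the ruled surface, using the Hodge index theorem together with the classification of negative curves. First I would record that, since $\dim S'=2=\dim S$, the morphism $p$ is generically finite. Fixing an ample divisor $H'$ on $S'$ and setting $H:=p^{*}H'$, the projection formula gives $H^{2}=\deg(p)\,(H')^{2}>0$, so $H$ is nef and big on $S$.

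Next, let $C$ be any irreducible component of $\Supp(D)$ (we may assume $D\neq 0$, otherwise there is nothing to prove). Because $\dim p(\Supp(D))=0$, the image $p(C)$ is a point, so $H\cdot C=H'\cdot p_{*}C=0$. Since $H^{2}>0$, the Hodge index theorem shows that the intersection form is negative definite on the hyperplane $H^{\perp}\subset N^{1}(S)_{\R}$. As $C$ is an irreducible curve on a projective surface it is not numerically trivial (it pairs positively with an ample class), and it lies in $H^{\perp}$; hence $C^{2}<0$. This single inequality already excludes vertical components: every fiber $F$ of $q$ satisfies $F^{2}=0$, so $C$ cannot be a fiber and must dominate $B$. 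This is the point where a bend-and-break argument could instead be used to rule out vertical components, but the self-intersection bound is more direct.

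Finally I would invoke the classification of irreducible curves on a geometrically ruled surface: the unique irreducible curve with negative self-intersection is the minimal section $C_{0}$, with $C_{0}^{2}=-e<0$ (a standard fact, e.g.\ Chapter V of Hartshorne's book on algebraic geometry). Consequently every component of $\Supp(D)$ equals $C_{0}$, so $\Supp(D)=C_{0}$, which maps isomorphically onto $B$ under $q$; that is, $\Supp(D)$ is a section of $q$.

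The only delicate points are bookkeeping rather than ideas. One must ensure that $H$ is big, which needs $p$ generically finite and is guaranteed by the equality $\dim S'=\dim S$, and that the Hodge index inequality is applied to each component separately to obtain strict negativity. The genuine conceptual input is the classification of negative curves on the ruled surface; granting it, the argument is immediate. If $B$ is permitted to be singular one should first pass to a smooth model so that $S$ is a smooth projective surface and the Hodge index theorem applies verbatim, but this does not affect the conclusion about $q$.
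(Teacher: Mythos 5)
Your proof is correct, and it takes a genuinely cleaner route than the paper while resting on the same two inputs. The paper argues in two explicit steps: it first shows $D$ is irreducible by writing two putative components as $a_{i}C_{0}+b_{i}f$ and deriving $0\leq 2a_{1}a_{2}(A_{1}\cdot A_{2})=a_{2}^{2}A_{1}^{2}+a_{1}^{2}A_{2}^{2}<0$ from the (asserted but standard) fact that contracted curves have negative self-intersection, and then shows the surviving component is a section by a case analysis on the sign of $C_{0}^{2}$, using Hartshorne V.2.21(a) when $C_{0}^{2}>0$. You instead make the negativity input explicit --- the Hodge index theorem applied to the nef and big class $p^{*}H'$ gives $C^{2}<0$ for every contracted component --- and then invoke the classification of negative curves on a geometrically ruled surface (itself a consequence of V.2.21) to identify every component with $C_{0}$ in one stroke, so irreducibility and sectionhood come out together; this is shorter and makes the role of the Hodge index theorem visible, whereas the paper stays at the level of elementary intersection computations. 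Two small points. First, your phrase ``the unique irreducible curve with negative self-intersection is $C_{0}$, with $C_{0}^{2}=-e<0$'' tacitly assumes $e>0$; when $e\leq 0$ the ruled surface carries no irreducible curve of negative self-intersection at all, so in that case your inequality $C^{2}<0$ shows the situation simply cannot arise --- the logic is unaffected, but it deserves a sentence. Second, the hypothesis $\dim S'=2$ must be read, as both you and the paper implicitly do, as $\dim p(S)=2$ (otherwise $p$ could be constant and the statement false); the paper's applications always verify that $p(S)$ is a surface before citing the lemma. Your reduction to a smooth base via normalization matches the paper's opening move.
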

\begin{proof} (see \cite{Mori} p.599, \cite{I} p.460, \cite{WisManu} p.138, 
or \cite{KollarBook} Ch.II. 5) We may assume that $B$ is smooth (if $B$ is singular, we consider its normalization $\wtilde{B}\to B$ and the fiber product 
$\wtilde{S}:=S\times _{B}\wtilde{B}$). 
Following the notation of \cite{Hartshorne} Ch.V. Proposition 2.8, 
let $C_{0}$ be a section of $q$ such that $C_{0}^{2}=-e$ and let $f$ be a fiber of $q$.  

{\it Step1}. We show that $D$ is irreducible. If not, let $A_{1}$ and $A_{2}$ be  
distinct irreducible components of $\Supp (D)$. Since $A_{i}$ is  an 
exceptional curve, we have $A_{i}^{2}<0$ ($i=1,2$). 
Since $A_{1}\neq A_{2}$, we have $A_{1}\cdot A_{2}\geq 0$. We write
$$
A_{i}\equiv a_{i}C_{0}+b_{i}f \ \ (i=1,2).
$$
Note that $a_{i}=A_{i}\cdot f\geq 1$ ($i=1,2$). We have 
$a_{2}A_{1}-a_{1}A_{2}\equiv (a_{2}b_{1}-a_{1}b_{2})f$, 
thus $(a_{2}A_{1}-a_{1}A_{2})^{2}=0$. Hence
$$
0\leq 2a_{1}a_{2}(A_{1}\cdot A_{2})=a_{2}^{2}A_{1}^{2}+a_{1}^{2}A_{2}^{2}<0,
$$
a contradiction. We conclude that $D$ is irreducible.

{\it Step 2}. We show that $\Gamma:=\Supp (D)$ is a section. We write 
$\Gamma\equiv aC_{0}+bf$. We consider the case $C_{0}^{2}\leq 0$.
We have $(\Gamma -aC_{0})^{2}=(bf)^{2}=0$. It follows that 
$$
2a (\Gamma\cdot C_{0})=\Gamma^{2}+a^{2} C_{0}^{2}<0.
$$
Hence, $\Gamma \cdot C_{0}<0$, which implies that $\Gamma=C_{0}$ 
is a section. Now, consider the case $C_{0}^{2}> 0$. 
Assume that 
$a=\Gamma\cdot f\geq 2$. Then, by \cite{Hartshorne} Ch.V. Proposition 2.21 (a), 
we have $2b-ae\geq 0$. Hence,  
$$
0\leq a(2b-ae)=(aC_{0}+bf)^{2}=\Gamma^{2}<0, 
$$
a contradiction. Therefore, $\Gamma\cdot f=1$.
\end{proof}

We recall some notation on the family of rational curves 
from \cite{KollarBook} to which we refer the reader for details. 
A {\it family of rational curves} on a projective variety $X$ is 
an irreducible component $V$ of the scheme $\Ratcurve(X)$ 
parameterizing rational curves on $X$. If $V$ is proper,  
it is called {\it unsplit}. Let $U$ be the universal family over $V$. 
Then we have the following basic diagram: 
\begin{equation}\label{diagram}
\begin{CD}
U  @>p >> X \\
@VqVV \\
V
\end{CD}
\end{equation}
where $q$ is a $\PP^{1}$-bundle and $p$ is the map induced by the evaluation map.
For $v\in V$, we denote by $f_{v}$ the corresponding rational curve, i.e. 
$f_{v}:=p(q^{-1}(v))$.

\begin{prop}\label{Ef=1}
Let $\pi: X\to Y$ be the blow-up of a smooth projective variety 
$Y$ of dimension $\geq 3$ along a smooth curve $C$. 
Let $E$ be the exceptional divisor. 
Let $V$ be an unsplit family of rational curves on $X$ 
such that $E\cdot f>0$ for some (hence for any) $[f]\in V$.  
If $\dim V\geq 3$, 
then we have $\sharp (E\cap f)=1$ for any $[f]\in V$ such that 
$f\not\subset E$.
\end{prop}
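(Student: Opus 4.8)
\emph{Proof strategy.} The plan is to argue by contradiction, feeding a degenerating family into the bend-and-break mechanism packaged in Lemma \ref{bendandbreak}. Suppose there is $[f_0]\in V$ with $f_0\not\subset E$ and $\sharp(E\cap f_0)\ge 2$, say $p_1,p_2\in E\cap f_0$ distinct. Write $\sigma:=\pi|_E:E\to C$ for the $\PP^{n-2}$-bundle structure of the exceptional divisor, set $c_0:=\sigma(p_1)$, and let $F:=\sigma^{-1}(c_0)\cong\PP^{n-2}$ be the fibre through $p_1$. First I would produce a one-parameter subfamily through $[f_0]$ along which to degenerate: since $F$ has codimension $2$ in $X$, the locus of members of $V$ meeting $F$ has dimension at least $\dim V-1\ge 2$, so it contains a complete curve $B\subset V$ passing through $[f_0]$ (properness of $B$ is automatic because $V$ is unsplit). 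Pulling back the universal family of \eqref{diagram} over $B$ gives a ruled surface $q:S\to B$ together with the induced morphism $p_S:S\to X$, with $q^{-1}([f_0])$ mapping onto $f_0$.

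Next I would push everything down by the blow-up. Set $g:=\pi\circ p_S:S\to Y$ and $S':=g(S)$. The image $S'$ is two-dimensional: otherwise $\pi$ would contract the surface $p_S(S)$ to a curve, which, as $\pi$ is an isomorphism away from $E$, forces $p_S(S)\subseteq E$ and hence $f_t\subset E$ for the members $f_t$, contrary to hypothesis. Thus $g:S\to S'$ realizes the situation of Lemma \ref{bendandbreak} with $\dim S'=2$. Consider the effective divisor $D:=p_S^{-1}(E)$ on $S$; since $\pi(E)=C$ we have $g(\Supp D)\subseteq C$. The whole argument then reduces to the single point $\dim g(\Supp D)=0$: granting this, Lemma \ref{bendandbreak} forces $\Supp D$ to be an irreducible section of $q$, which therefore meets the fibre $q^{-1}([f_0])$ in exactly one point. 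But that fibre maps onto $f_0$, and $D$ records the $\ge 2$ points of $f_0\cap E$, a contradiction. Hence $\sharp(E\cap f)=1$ for every $[f]\in V$ with $f\not\subset E$.

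The main obstacle is precisely the verification that $\dim g(\Supp D)=0$, that is, that the intersection points of the moving curves with $E$ do not sweep out a curve inside $C$ (equivalently $C\not\subseteq S'$ and $S'$ meets $C$ only in finitely many points). This is where both the $\PP^{n-2}$-bundle structure of $E$ and the choice of $B$ among curves meeting the fixed fibre $F$ are essential: fixing $F$ pins the intersection with $E$ over the single point $c_0$, while the hypothesis $\dim V\ge 3$ is exactly what guarantees enough such curves through $[f_0]$ to build the complete curve $B$ without the family collapsing into $E$. I would expect the careful bookkeeping to separate the component of $D$ lying over $c_0$ from any stray intersections over other points of $C$, ruling the latter out by combining $E\cdot f>0$ with the irreducibility conclusion of Step~1 of Lemma \ref{bendandbreak}.
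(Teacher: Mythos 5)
Your overall architecture matches the paper's: restrict the universal family to a complete curve $B$ of members through a fixed fiber $E_{c_0}=\pi^{-1}(c_0)$, push the resulting ruled surface down to $Y$, and apply Lemma \ref{bendandbreak} to the preimage of $E$ to force a section, contradicting $\sharp(E\cap f_0)\geq 2$. But the step you explicitly defer --- the verification that $\dim g(\Supp D)=0$, i.e.\ that the members $f_v$ for $v\in B$ meet $E$ only over the single point $c_0$ --- is not bookkeeping; it is the core of the proof, and it does not follow from the choice of $B$ inside $V_{c_0}$. Nothing prevents a general member of $V_{c_0}$ from meeting $E_{c_0}$ \emph{and} some other fiber $E_c$, in which case the intersection points sweep out a curve in $C$ and Lemma \ref{bendandbreak} is not applicable. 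The paper closes this gap with a separate preliminary step: if some $f_0\not\subset E$ satisfied $\sharp(\pi(f_0)\cap C)\geq 2$, say meeting $E_a$ and $E_b$, then $\dim(V_a\cap V_b)\geq \dim V_a+\dim V_b-\dim V\geq \dim V-2\geq 1$, so one finds a complete curve in $V_a\cap V_b$ whose associated ruled surface carries \emph{two} distinct exceptional curves (contracted to $a$ and to $b$ respectively), contradicting the irreducibility part of Lemma \ref{bendandbreak}. This is a second, independent bend-and-break argument on a different subfamily, and it is exactly where the hypothesis $\dim V\geq 3$ is used --- not, as you suggest, in producing the curve $B\subset V_{c_0}$, for which $\dim V_{c_0}\geq 1$ would already suffice. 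Only after this step does one know $f_v\cap E\subset E_{c_0}$ for the relevant $v\in B$, so that the horizontal part of $p_S^{*}E|_S$ is contracted to the point $c_0$.

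A secondary issue: you apply Lemma \ref{bendandbreak} to $D:=p_S^{-1}(E)$ wholesale, but this divisor may have vertical components, namely fibers $q^{-1}(v)$ with $f_v\subset E$; such an $f_v$ can dominate $C$, making $\dim g(\Supp D)=1$ even when everything else works. The paper avoids this by decomposing $p^{*}E|_S=D+F$ into horizontal and vertical parts, proving $p(\Supp D)\subset E_{c_0}$ by a limiting argument over $B\setminus q(F')$, and applying the lemma only to $\Supp D$. As written, your proposal has a genuine gap at its central step.
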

\begin{proof}
Consider the above diagram (\ref{diagram}) of the family $V$.
For a point $c\in C$ such that $p(U)\cap E_{c}\neq \varnothing$, we put:
$$
E_{c}:=\pi^{-1}(c),\ U_{c}:=p^{-1}(E_{c}),\ V_{c}:=q(U_{c}).
$$
From the assumption that $E\cdot f>0$, the rational curve $f$ is not contracted by $\pi$.  
Thus, $q |_{U_{c}}: U_{c}\to V_{c}$ is a finite map. In particular, 
$\dim U_{c}=\dim V_{c}$. Hence, we have 
\begin{eqnarray*}
\dim V_{c}&=&\dim U_{c} \\ 
&\geq& \dim U-\dim p(U)+\dim (p(U)\cap E_{c}) \\  
&\geq& \dim U-\dim p(U)+(\dim p(U)+\dim E_{c}-\dim X) \\ 
&=&\dim U-2.
\end{eqnarray*}
Since $q: U\to V$ is a $\PP^{1}$-bundle, we have $\dim U=\dim V+1$. 
Therefore, 
\begin{equation}\label{dimension}
\dim V_{c}\geq \dim V-1.
\end{equation}

{\it Step 1.} We first show that if $f\not\subset E$ then 
$\sharp (\pi (f)\cap C)=1$. Assume to the contrary 
that there exists $[f_{0}]\in V$ such that 
$f_{0}\not\subset E$ and $\sharp (\pi (f_{0})\cap C)\geq 2$. 
Let $a, b\in \pi (f_{0})\cap C$ be distinct points. 
Note that $V_{a}\cap V_{b}\neq \varnothing$ 
since the point $[f_{0}]$ lies on the intersection. Using the inequality
 (\ref{dimension}), we have 
$$
\dim (V_{a}\cap V_{b})\geq \dim V_{a}+\dim V_{b}-\dim V
=\dim V -2\geq 1.
$$
Hence, there exists an irreducible curve $B\subset V_{a}\cap V_{b}$. 
Consider the ruled surface $S:=q^{-1}(B)$.  
Since $\pi (f_{0})\not \subset C$, the image $\pi\circ p(S)$ is a surface. 
We see that $U_{a}\cap S$ and $U_{b}\cap S$ are exceptional curves 
because these are respectively contracted to the points $a$ and $b$. 
Thus, we have a contradiction by Lemma \ref{bendandbreak}. 

{\it Step 2.} Consider a rational curve $f$ from the family $V$. 
Assume $f\not \subset E$. By Step 1, we have 
$\sharp (\pi (f)\cap C)=1$. We put $c:=\pi (f)\cap C$. 
By the inequality (\ref{dimension}), there exists an irreducible curve $B$ in $V_{c}$ 
passing through the point $[f]\in V$. Consider the ruled surface 
$S:=q^{-1}(B)$. Since $f\not \subset E$, we see that $p(S)\not \subset E$. 
We write 
$$
p^{*}E |_{S}=D+F
$$ 
where $D$ is the horizontal part and $F$ is the vertical part, i.e. 
$\dim q(D)=1$ and $\dim q(F)=0$. We put $D':=\Supp (D)$ and 
$F':=\Supp (F)$. We have $D'\subset p^{-1}(E)\cap S$, hence $p(D')\subset E$. 
Recall that $B\subset V_{c}$. For any $v\in B\setminus q(F')$, 
we have $f_{v}\cap E_{c}\neq \varnothing$, and hence by Step 1, 
we see that $\pi(f_{v})\cap C=c$, i.e. $f_{v}\cap E\subset E_{c}$. 
Let $u$ be a point 
in $D'\setminus F'$. Since $q(u)\in B\setminus q(F')$, we have 
$f_{q(u)}\cap E\subset E_{c}$. Therefore, 
$$
p(u)\in f_{q(u)}\cap p(D')\subset f_{q(u)}\cap E\subset E_{c}.
$$
Thus, $p(D'\setminus F')\subset E_{c}$. Taking the closure, we conclude that 
$p(D')\subset E_{c}$. Hence, $\pi\circ p(D')=c$. As in Step 1, we see that 
$\pi\circ p(S)$ is a surface. By Lemma \ref{bendandbreak}, $D'$ is a 
section of $q |_{S}$, which implies that $\sharp (E\cap f)=1$. 
\end{proof}

\section{Case of Fano manifolds with a divisorial contraction to a point}\label{divtopt}

We first give an example in which the equality $i(X)=\ell (X)$ is easily verified.
Let $\pi : X\to \PP^{n}$ be the 
blow-up at a point $a$. We assume $n\geq 3$. We consider the diagram: 
$$
\begin{CD}
X@>\pi >> \PP^{n} \\
@V\varphi VV \\
\PP^{n-1}
\end{CD}
$$
where $\varphi :X\to \PP^{n-1}$ is the $\PP^{1}$-bundle 
whose fibers are the strict transforms of lines passing through $a$. 
Let $e$ be a line in the exceptional divisor 
$E\simeq \PP^{n-1}$ and let $f$ be a fiber of $\varphi$. 
Then, $R_{1}:=\RR[e]$ and $R_{2}:=\RR[f]$ are extremal rays. Since 
$\rho (X)=2$, we have 
$$\overline{\NE}(X)=R_{1}+R_{2}.$$
Note that $\ell (R_{1})=-K_{X}\cdot e=n-1$ and 
$\ell (R_{2})=-K_{X}\cdot f=2$. Hence, we have 
$\ell(X)=2$. 
Put $H:=\pi^{*}\OO_{\PP^{n}}(1)$ and 
$L:=\varphi^{*}\OO_{\PP^{n-1}}(1)=H-E$.  
Remark that $L$ is the strict transform of a hyperplane containing the 
point $a$. We get 
$$
-K_{X}=\pi^{*}(-K_{\PP^{n}})-(n-1)E=(n+1)H-(n-1)(H-L)=2H+(n-1)L. 
$$
Note that for a curve $\Gamma\subset X$, we have 
$\pi_{*}\Gamma\not\equiv 0$ or $\varphi_{*}\Gamma\not\equiv 0$. 
If $\Gamma_{0}$ is a rational curve such that 
$(-K_{X})\cdot \Gamma_{0}=i(X)$, then we have 
$$
i (X)=(-K_{X})\cdot \Gamma_{0}
=(2H+(n-1)L)\cdot \Gamma_{0}\geq 2=\ell (X). 
$$
It follows that $i(X)=\ell (X)$. 

\

We generalize the above example as follows: 

\begin{prop}\label{divisortopoint} Let $X$ be a Fano manifold 
of dimension $n\geq 3$. 
Assume that there exists a birational extremal contraction 
$\pi :X\to Y$ sending a divisor to a point.
Then, we have $i(X)=\ell (X)$.   
\end{prop}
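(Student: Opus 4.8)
The plan is to prove the non-trivial inequality $i(X)\ge\ell(X)$, since $\ell(X)\ge i(X)$ is automatic. First I would dispose of the case $\ell(X)=1$: then $1\le i(X)\le\ell(X)=1$ forces $i(X)=\ell(X)=1$, so from now on I assume $\ell(X)\ge 2$. Write $R_{0}\subset\NNE(X)$ for the extremal ray contracted by $\pi$ and $E$ for its exceptional divisor, so that $\pi(E)$ is the point and a curve of $X$ is $\pi$-contracted exactly when it lies in $E$; in particular the classes of curves contained in $E$ span $R_{0}$. Since $E$ is collapsed to a point, $-E|_{E}$ is ample on $E$. I record that $\ell(R_{0})\ge\ell(X)\ge 2$.

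Next I would fix a rational curve $\Gamma_{0}$ with $-K_{X}\cdot\Gamma_{0}=i(X)$. By minimality of its anticanonical degree, $\Gamma_{0}$ cannot specialize to a connected reducible or non-reduced rational $1$-cycle: each component would again be a rational curve of degree $\ge i(X)$, so a breaking would force $-K_{X}\cdot\Gamma_{0}\ge 2\,i(X)$, which is absurd. Hence the family $V$ of deformations of $\Gamma_{0}$ is unsplit, and I may attach to it the basic diagram (\ref{diagram}). The argument then splits according to the position of $[\Gamma_{0}]$ relative to $R_{0}$. If $[\Gamma_{0}]\in R_{0}$ (in particular if $\Gamma_{0}\subset E$), then $\ell(R_{0})\le -K_{X}\cdot\Gamma_{0}=i(X)$, and together with $\ell(R_{0})\ge i(X)$ this gives $\ell(R_{0})=i(X)$; since $\ell(X)\le\ell(R_{0})$, we conclude $i(X)=\ell(X)$. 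So the substance of the proof is the case $[\Gamma_{0}]\notin R_{0}$, i.e. $\Gamma_{0}\not\subset E$ and $E\cdot\Gamma_{0}\ge 0$, where the goal is to exhibit an extremal ray $R$ with $\ell(R)\le i(X)$; once this is found, $\ell(X)\le\ell(R)\le i(X)$ closes the argument. The model to keep in mind is the blow-up of a point treated above, where the companion $\PP^{1}$-bundle $\varphi$ produced exactly such a short ray and $-K_{X}$ decomposed as a positive combination of two nef pullbacks.

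To produce the short ray in general I would exploit that, because $E$ is a whole divisor collapsed to a single point, the unsplit family $V$ must interact strongly with $E$. Replacing $V$ if necessary by a dominating family of minimal anticanonical degree, the locus swept by curves of $V$ through a general point of $E$ has dimension at least $i(X)-1$, while no member of $V$ through such a point can lie in $E$ (that would put $[\Gamma_{0}]\in R_{0}$). Restricting $V$ over an irreducible curve $B$ in the parameter space and applying Lemma \ref{bendandbreak} to the resulting ruled surface $q^{-1}(B)$ — exactly the device used in the proof of Proposition \ref{Ef=1} — I would control how the curves of $V$ meet $E$ and thereby show that $[\Gamma_{0}]$ either already spans an extremal ray or decomposes as a positive combination $\sum_{j}c_{j}[f_{j}]$ of minimal classes $f_{j}$ of extremal rays $R_{j}$ with $\sum_{j}c_{j}\ge 1$. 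In the latter case $i(X)=-K_{X}\cdot\Gamma_{0}=\sum_{j}c_{j}\,\ell(R_{j})\ge(\sum_{j}c_{j})\,\ell(X)\ge\ell(X)$, which is exactly what is wanted.

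The main obstacle is precisely this last step: guaranteeing that the decomposition of $[\Gamma_{0}]$ carries total coefficient at least $1$ on the minimal extremal classes — equivalently, that when $[\Gamma_{0}]\notin R_{0}$ the minimal class still dominates some extremal ray of no larger length. This is the content of the \textbf{Question} of the introduction, which is genuinely delicate. What makes it tractable in the present situation is the very special geometry of a divisorial contraction to a point: the exceptional locus is a full divisor $E$ with $-E|_{E}$ ample, so the bend-and-break and dimension estimates are as strong as possible, and the classification of such contractions pins down the pair $(E,-E|_{E})$ — essentially a projective space or a quadric — forcing the intersection indices that enter the decomposition to equal $1$. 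I expect the careful bookkeeping of these indices, rather than any single geometric idea, to be where the real work lies.
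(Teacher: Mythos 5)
Your reduction to the case $\ell(X)\ge 2$, the unsplitness of the family of a minimal rational curve $\Gamma_{0}$, and the disposal of the subcase $[\Gamma_{0}]\in R_{0}$ are all fine. The gap is the entire remaining case $[\Gamma_{0}]\notin R_{0}$: your plan there is to show that $[\Gamma_{0}]$ decomposes as a positive combination $\sum_{j}c_{j}[f_{j}]$ of minimal extremal classes with $\sum_{j}c_{j}\ge 1$. That is precisely the \textbf{Question} stated in the introduction, which the paper explicitly identifies as open beyond dimension three and deliberately avoids; you acknowledge it as ``the main obstacle'' but then offer only an appeal to ``careful bookkeeping'' of intersection indices, with no actual mechanism. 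The bend-and-break device of Lemma \ref{bendandbreak} controls how curves of an unsplit family meet $E$ (as in Proposition \ref{Ef=1}); it does not produce a decomposition of $[\Gamma_{0}]$ into extremal classes, nor a lower bound on the total coefficient of such a decomposition. As written, the core of the proof is missing.

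The paper's actual argument decomposes the \emph{anticanonical divisor} rather than the curve class — the dual of what you attempt, and much closer to the worked example of $\Bl_{a}(\PP^{n})$ that you cite as your model. Concretely: by \cite{Wis} Corollary 1.3 there is an extremal ray $R$ with $E\cdot R>0$ whose contraction $\varphi$ is a $\PP^{1}$-bundle, a conic bundle with singular fibers, or a codimension-two blow-up; in the last two cases $\ell(R)=1$ and one is done at once. In the $\PP^{1}$-bundle case, Lemma \ref{bendandbreak} gives $E\cdot f=1$ for a fiber $f$ (so $\ell(X)\le 2$, reducing to excluding $\ell(X)=2$, $i(X)=1$), $\varphi|_{E}:E\to Z$ is an isomorphism, and the canonical bundle formula yields
$$
-K_{X}=2E+\varphi^{*}(-K_{Z})+\varphi^{*}(\varphi_{*}N_{E/X}^{*}),
$$
a sum of a divisor nonnegative on $\Gamma_{0}$ (since $\Gamma_{0}\not\subset E$) and two pullbacks each positive on $\Gamma_{0}$ (since $Z$ is Fano and $N_{E/X}^{*}$ is ample), forcing $-K_{X}\cdot\Gamma_{0}\ge 2$ and the desired contradiction. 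You would need to replace your curve-class decomposition step with something of this kind for the proof to go through.
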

\begin{proof}
Let $E$ be the exceptional divisor of $\pi$. 
By \cite{Wis} Corollary 1.3, there exists an extremal ray 
$R\subset\NNE(X)$ such that $E\cdot R>0$, and the associated contraction 
$\varphi=\cont_{R}:X\to Z$ is either:  
\begin{enumerate}
\item a $\PP^{1}$-bundle,
\item a conic bundle with singular fibers, or 
\item a smooth blow-up along a smooth subvariety of codimension 2. 
\end{enumerate}
In case (2) and (3), $\ell (R)=1$. Hence, $i(X)=\ell (X)=1$, and  we are done. 
We assume that $\varphi:X\to Z$ is a $\PP^{1}$-bundle.  
Note that the base space $Z$ is smooth (see \cite{Ando}).
Let $f$ be a fiber of $\varphi$. 
Let $B$ be an irreducible curve on $Z$ passing through the point $\varphi(f)$. 
Consider the ruled surface $S:=\varphi^{-1}(B)$. Since $S\cap E$ is an exceptional curve, 
by Lemma \ref{bendandbreak} it is a section of $\varphi |_{S}$. 
This implies that $E\cdot f=1$ because the exceptional diviosr $E$ 
is reduced (see \cite{KawamataMM} Proposition 5-1-6). 
We conclude that  
$\varphi |_{E}:E\to Z$ is an isomorphism. 
Note that $E\simeq Z$ is smooth.

Using the rank 2 vector bundle 
$\EE:=\varphi_{*}\OO_{X}(E)$, we write $X=\PP(\EE)$. 
Pushing down the exact sequence 
$$
0\to \OO_{X}\to \OO_{X}(E)\to \OO_{E}(E)\to 0,
$$
we obtain
$$
0\to \varphi_{*}\OO_{X}\to \varphi_{*}\OO_{X}(E)\to \varphi_{*}\OO_{E}(E)\to R^{1}\varphi_{*}\OO_{X}=0. 
$$
Here, we have 
$\varphi_{*}\OO_{X}\simeq \OO_{Z}$ and $\varphi_{*}\OO_{E}(E)\simeq \varphi_{*}N_{E/X}$. 
Thus, 
$\det \EE \simeq \varphi_{*}N_{E/X}$. Note also that  
the hyperplane bundle $\OO_{\PP(\EE)}(1)$ is isomorphic to $E$. 
Using 
the canonical bundle formula for the $\PP^{1}$-bundle, we get 
\begin{equation}\label{formula}
K_{X}=-2E+\varphi^{*}(K_{Z}+\varphi_{*}N_{E/X}). 
\end{equation}
Now, assume to the contrary that $\ell (X)>i (X)$, 
i.e. $\ell (X)=2$ and $i (X)=1$. 
Let $\Gamma_{0}\subset X$ be a rational curve such that 
$-K_{X}\cdot\Gamma_{0}=i (X)=1$. In particular,  
$\Gamma_{0}$ is not a fiber of $\varphi$. 
Hence, $\varphi_{*}\Gamma_{0}\not\equiv 0$.
If $\Gamma_{0}\subset E$, then $-K_{X}\cdot \Gamma_{0}\geq \ell (R)\geq \ell (X)=2$, 
a contradiction. Thus, $\Gamma_{0}\not \subset E$ so that $E\cdot \Gamma_{0}\geq 0$. 
Since $\varphi: X\to Z$ is a $\PP^{1}$-bundle, by 
\cite{SW} Theorem 1.6 or \cite{KMM} Corollary 2.9, 
we conclude that $Z$ is a Fano manifold. In particular, 
$-K_{Z}\cdot \varphi_{*}\Gamma_{0}>0$.
Note that the conormal bundle 
$N_{E/X}^{*}$ is ample, since $E$ is an exceptional divisor. 
Therefore, using (\ref{formula}) we have  
$$
1=-K_{X}\cdot \Gamma_{0}=2E\cdot \Gamma_{0}+\varphi^{*}(-K_{Z})\cdot \Gamma_{0}+\varphi^{*}(\varphi_{*}N_{E/X}^{*})\cdot \Gamma_{0}
\geq 0+1+1=2, 
$$
a contradiction.  \end{proof}

\section{Classification results}\label{classification}
In this section, we present results on a partial classification of 
Fano manifolds with $\ell (X)\geq 2$. These are 
used in the next section to prove our Theorem \ref{main}.
 
\begin{lem}\label{ER>0} Let $Y$ be a smooth projective variety of 
dimension $n\geq 4$. Let $\pi:X\to Y$ be the blow-up 
along a smooth curve $C\subset Y$. Assume that $X$ is 
a Fano manifold. Let $E$ be the exceptional divisor of $\pi$. 
Then, there exists an extremal ray $R\subset \NNE(X)$ such that $E\cdot R>0$. 
Furthermore, every non-trivial fiber of 
the associated contraction $\varphi:X\to Z$ has dimension at most 2. 
\end{lem}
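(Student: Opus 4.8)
The plan is to first pin down the extremal ray $R$ and then bound the fibers of $\varphi=\cont_{R}$ by exploiting that $\pi$ restricted to such a fiber is finite. For the existence of $R$ I would argue as in Proposition \ref{divisortopoint}: $E$ is effective, so $E\cdot A^{n-1}>0$ for an ample divisor $A$; were $E\cdot R'\le 0$ for every extremal ray $R'$ of $\NNE(X)$, then $E\cdot\gamma\le 0$ on all of $\NNE(X)$, contradicting $E\cdot A^{n-1}>0$. Thus \cite{Wis} Corollary 1.3 provides an extremal ray $R$ with $E\cdot R>0$; set $\varphi=\cont_{R}\colon X\to Z$. On the other hand, a line $\ell$ in a $\PP^{n-2}$-fiber of $\pi|_{E}$ satisfies $E\cdot\ell=-1$, and these lines generate the ray $R_{\pi}$ contracted by $\pi$; since $E\cdot R>0$ while $E\cdot R_{\pi}<0$, we have $R\neq R_{\pi}$.

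The heart of the matter, and the step I expect to carry the whole argument, is the observation that $\pi$ is finite on every fiber of $\varphi$. Let $F$ be an irreducible component of a non-trivial fiber, $\dim F=d\ge 1$, and set $G:=\pi(F)$. Any irreducible curve $B\subset F$ is $\varphi$-contracted, so $[B]\in R$; since $R\neq R_{\pi}$, no such $B$ lies in $R_{\pi}$, i.e. none is $\pi$-contracted. Hence $\pi|_{F}\colon F\to G$ contracts no curve and is therefore finite, giving $\dim G=d$.

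It then remains to bound $d$, and here I would intersect with $E$. Picking an irreducible curve $B\subset F$, from $[B]\in R$ we get $E\cdot B>0$, so $B\cap E\neq\varnothing$ and in particular $F\cap E\neq\varnothing$. If $F\subset E$, then $G=\pi(F)\subset C$, and the finiteness of $\pi|_{F}$ forces $d\le\dim C=1$. If instead $F\not\subset E$, then $E$ being a Cartier divisor makes $F\cap E$ a nonempty proper subset of $F$ of pure codimension one, so $\dim(F\cap E)=d-1$; applying the finite map $\pi|_{F}$ and using $\pi(F\cap E)\subset\pi(E)=C$ yields $d-1=\dim(F\cap E)=\dim\pi(F\cap E)\le\dim C=1$, whence $d\le 2$. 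Either way every non-trivial fiber has dimension at most $2$. The only delicate point is the finiteness in the middle step, which hinges solely on separating $R$ from $R_{\pi}$ by the sign of $E$; granting that, the codimension count against the curve $C$ finishes the proof with no recourse to Wi\'sniewski-type fiber inequalities.
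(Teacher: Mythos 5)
Your argument is correct and complete, and it is essentially the argument the paper intends: the paper's ``proof'' is only a pointer to \cite{BCW} Section 2, and what you wrote is precisely that style of reasoning (an extremal ray $R$ with $E\cdot R>0$ exists because $E$ is effective and hence positive on some generator of the polyhedral cone $\NNE(X)$; $R\neq R_{\pi}$ since $E\cdot R_{\pi}<0$, so $\pi$ is finite on fibers of $\cont_{R}$; then the Cartier-divisor dimension count against $\pi(E)=C$, $\dim C=1$, gives the bound $2$). All the individual steps check out, including the finiteness of $\pi|_{F}$ (a projective morphism contracting no curve is finite) and the purity of $\dim(F\cap E)=d-1$ when $F\not\subset E$, so nothing further is needed.
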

\begin{proof}
This follows from a similar argument as in \cite{BCW} Section 2.
\end{proof}
Throughout the section, we fix the notation of this lemma.
\begin{prop}\label{CR}
If $\varphi$ is a fiber type contraction with $\dim Z=n-2$ and $\ell (R)\geq 2$, 
then we have either: $Y=\PP^{n}$ and 
$C$ is a line, 
or $Y=Q_{n}$ and $C$ is a conic not on a plane contained in 
$Q_{n}$.
\end{prop}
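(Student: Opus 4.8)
The plan is to analyze the general fibre of $\varphi$ together with its image under $\pi$, and then to invoke the characterizations of $\PP^{n}$ and $Q_{n}$ by families of rational curves. Since $\varphi$ is of fibre type with $\dim Z=n-2$ and every non-trivial fibre has dimension at most $2$ by Lemma \ref{ER>0}, the general fibre $F$ is a smooth surface; being a general fibre of an extremal contraction of fibre type it is a del Pezzo surface with $-K_{F}=-K_{X}|_{F}$ (the normal bundle being trivial for a moving fibre). Every curve of $F$ lies in $R$, so $\ell(R)=\min\{-K_{F}\cdot\gamma\mid\gamma\subset F\}$. Thus, if $F$ carried a $(-1)$-curve we would get $\ell(R)=1$, contradicting $\ell(R)\ge 2$. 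As the only del Pezzo surfaces without a $(-1)$-curve are $\PP^{2}$ and $\PP^{1}\times\PP^{1}$, we conclude that either $F\simeq\PP^{2}$ with $\ell(R)=3$, or $F\simeq\PP^{1}\times\PP^{1}$ with $\ell(R)=2$, the minimal curves $f$ being the lines, resp. the two rulings.

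Next I would record the numerics. The family $V$ of minimal rational curves of $R$ is unsplit and dominates $X$, so $\dim V\ge -K_{X}\cdot f+\dim X-3=\ell(R)+n-3\ge n-1\ge 3$. Since $E\cdot f>0$, Proposition \ref{Ef=1} gives $\sharp(E\cap f)=1$, and as $f$ is a general (hence free) member it meets $E$ transversally, so $E\cdot f=1$. Because $f\not\subset E$ meets $E$ in a single point, $\pi|_{f}$ is birational onto its image, a rational curve $\pi(f)\subset Y$. From $K_{X}=\pi^{*}K_{Y}+(n-2)E$ we then obtain
$$-K_{Y}\cdot\pi(f)=-K_{X}\cdot f+(n-2)\,E\cdot f=\ell(R)+(n-2),$$
which equals $n+1$ when $F\simeq\PP^{2}$ and $n$ when $F\simeq\PP^{1}\times\PP^{1}$.

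To identify $Y$, note that the surfaces $\pi(F)$ sweep out $Y$, so the $\pi(f)$ form a dominating family of rational curves of anticanonical degree $\ge n$; moreover $Y$ is Fano (push $-K_{X}=\pi^{*}(-K_{Y})-(n-2)E$ down and test positivity on strict transforms of curves). By the characterization of projective space and quadrics through dominating families of rational curves (Cho--Miyaoka--Shepherd-Barron, Kebekus, Dedieu--H\"oring), a variety carrying such a family of degree $\ge n+1$ is $\PP^{n}$, while degree exactly $n$ with curves of minimal degree forces $\PP^{n}$ or $Q_{n}$. Hence $Y\simeq\PP^{n}$ in the first case and $Y\simeq Q_{n}$ in the second, and in either case $\pi(f)$ is a line. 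Finally one recovers $C$: since $\pi|_{F}$ sends the rulings of $F$ to lines it is finite and birational onto its image, a $2$-plane $P\subset\PP^{n}$, resp. a smooth quadric surface $S\subset Q_{n}$ spanning a $\PP^{3}$; and $E\cdot f=1$ forces $E|_{F}$ to be a line in $\PP^{2}$, resp. a $(1,1)$-curve on $\PP^{1}\times\PP^{1}$. As $\pi|_{F}$ is finite it maps $E\cap F$ isomorphically onto $C$, so $C=\pi(E\cap F)$ is a line in $\PP^{n}$, resp. a smooth conic in $Q_{n}$; in the quadric case $\langle C\rangle\cap Q_{n}=P\cap S=C$, so the plane $\langle C\rangle$ is not contained in $Q_{n}$, i.e. $C$ is a conic not lying on a plane contained in $Q_{n}$.

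I expect the main obstacle to be the borderline degree-$n$ case: verifying that the image family $\{\pi(f)\}$ meets the minimality hypotheses of the quadric characterization (in particular that $\pi(f)$ does not degenerate into lower-degree rational curves on $Y$) and confirming that $Y$ is Fano. Once $Y$ and the type of $F$ are pinned down, the determination of $C$ and the verification that $\pi$ is precisely the blow-up of this $C$ follow from the geometry of the family of surfaces $\pi(F)$ passing through $C$.
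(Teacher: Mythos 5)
The paper's own proof of Proposition \ref{CR} is two lines: it observes, exactly as you do in your first paragraph, that $\ell(R)\geq 2$ forces the general fiber of $\varphi$ to be $\PP^{2}$ or $Q_{2}$ (a del Pezzo surface without $(-1)$-curves), and then cites the author's earlier classification \cite{T1}, Theorem 1.1, of such del Pezzo fibrations obtained by blowing up a smooth curve. You instead try to reprove that classification from scratch via the Cho--Miyaoka--Shepherd-Barron characterization of $\PP^{n}$ and the quadric characterization. That is a legitimately different route, and your opening reduction to $F\simeq\PP^{2}$ or $\PP^{1}\times\PP^{1}$, the use of Proposition \ref{Ef=1} to get $E\cdot f=1$, and the degree computation $-K_{Y}\cdot\pi(f)=\ell(R)+(n-2)$ are all sound. (Minor quibble: you assert $\ell(R)=3$ when $F\simeq\PP^{2}$, but a priori you only get $\ell(R)\leq 3$; what your argument actually uses is the unsplit dominating family of lines in general fibers, whose $-K_{X}$-degree is $3$, so this is harmless.)

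However, the step you yourself flag as ``the main obstacle'' is a genuine gap, not a technicality. The characterization theorems you invoke require the relevant covering family on $Y$ -- i.e.\ the full irreducible component of $\Ratcurve(Y)$ containing $[\pi(f)]$ -- to be generically unsplit (proper through a general point), or else a lower bound on the degrees of \emph{all} rational curves through a general point of $Y$. You have exhibited only a thin subfamily (one or two curves $\pi(f)$ through each general point, coming from the single fiber $F$ containing its preimage), and nothing rules out that $\pi(f)$ deforms in $Y$ to curves that break into pieces of low degree, or that $Y$ carries some unrelated low-degree dominating family; the strict-transform estimate $-K_{Y}\cdot\gamma\geq -K_{X}\cdot\wtilde{\gamma}+(n-2)E\cdot\wtilde{\gamma}$ gives no useful bound when $E\cdot\wtilde{\gamma}=0$. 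Without closing this, neither the $\PP^{n}$ nor the $Q_{n}$ conclusion follows. Two further points need repair: (i) your parenthetical proof that $Y$ is Fano is insufficient as stated -- blow-downs of Fano manifolds along curves need not be Fano (see example (5) of Proposition \ref{p1} in this very paper); here it can be saved by noting that $E\cap F$ is a curve in the ray $R$, so $E\cdot(E\cap F)>0$ and $-K_{X}\cdot(E\cap F)\geq 2$, whence $-K_{Y}\cdot C>0$, but you must say this. (ii) At the end, the claim that $\pi|_{F}$ maps $E\cap F$ \emph{isomorphically} onto $C$ needs normality of $\pi(F)$, and the exclusion of a conic lying on a plane $P\subset Q_{n}$ is asserted rather than proved (the standard argument: a general line of $P$ meets $C$ twice, so its strict transform has $-K_{X}$-degree $n-2(n-2)=4-n\leq 0$, contradicting ampleness of $-K_{X}$).
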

\begin{proof} Since $\ell (R)\geq 2$,  the general fiber of $\varphi$ is 
isomorphic to $\PP^{2}$ or $Q_{2}$. 
Hence, the statement follows from \cite{T1} Theorem 1.1.
\end{proof}
If the general fiber of $\varphi$ has dimension one, 
we have $\ell (R)\leq 2$. We prove the following:  
\begin{prop}\label{p1}
If $\varphi$ is a fiber type contraction with $\dim Z=n-1$ and $\ell (R)=2$, 
then the pair $(Y,C)$ is exactly one of the following: 
\begin{enumerate}
\item $Y=Q_{n}$ and $C$ is a line;
\item $Y=\PP^{1}\times \PP^{n-1}$ and $C$ is a fiber of 
the projection $\PP^{1}\times \PP^{n-1}\to \PP^{n-1}$;
\item $Y=\Bl_{\PP^{n-2}}(\PP^{n})$ and $C$ is the strict transform 
of a line in $\PP^{n}$;
\item $Y=\Bl_{\PP^{n-2}}(\PP^{n})$ and $C$ is a fiber of 
the exceptional divisor;
\item $Y=\PP(\OO_{\PP^{1}}\oplus\OO_{\PP^{1}}(1)^{\oplus(n-1)})$ and 
$C$ is the section such that 
$N_{C/Y}\simeq \OO_{\PP^{1}}(-1)^{\oplus(n-1)}$.
\end{enumerate}
\end{prop}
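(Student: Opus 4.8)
The plan is to exploit the two extremal contractions of $X$ simultaneously: the blow-up $\pi\colon X\to Y$ with exceptional divisor $E$, and the fiber-type contraction $\varphi\colon X\to Z$ of the ray $R$ with $E\cdot R>0$. Since $\dim Z=n-1$, the general fiber of $\varphi$ is a rational curve $f$ with $[f]\in R$ and $-K_X\cdot f=\ell(R)=2$, i.e. an irreducible conic. Because $\ell(R)=2$ forbids any curve of anticanonical degree $1$ in $R$, no such conic can degenerate into a pair of lines or a double line; hence the component $V\subset\Ratcurve(X)$ containing the general fibers has empty boundary and is unsplit, of dimension $\dim Z=n-1\geq 3$. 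Proposition \ref{Ef=1} then applies and gives $\sharp(E\cap f)=1$ for every $[f]\in V$ with $f\not\subset E$; since $E$ is reduced, arguing as in Proposition \ref{divisortopoint} this forces $E\cdot f=1$. Consequently $\varphi|_E\colon E\to Z$ has degree $E\cdot f=1$ and is birational. I would also record, exactly as in Proposition \ref{divisortopoint}, that $Z$ is a smooth Fano $(n-1)$-fold by \cite{SW} Theorem 1.6 (or \cite{KMM} Corollary 2.9).

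\textbf{Step 2: determining $C$ and $N_{C/Y}$.} The exceptional divisor $E=\PP(N^{*}_{C/Y})$ is a $\PP^{n-2}$-bundle over the smooth curve $C$, hence smooth, and it is birational to the rationally connected variety $Z$. As rational connectedness is a birational invariant of smooth projective varieties and descends to images of dominant morphisms, the curve $C=\pi(E)$ is rationally connected, so $C\cong\PP^{1}$. Writing $N_{C/Y}=\bigoplus_{i=1}^{n-1}\OO_{\PP^{1}}(e_{i})$, the divisor $E$ becomes the projectivization of a decomposable bundle on $\PP^{1}$. I would then convert the ampleness of $-K_X$ into numerical constraints. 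Using $-K_X=\pi^{*}(-K_Y)-(n-2)E$ and testing against the natural curve classes on $E$ — a line $\lambda$ in a $\PP^{n-2}$-fiber of $\pi|_E$, for which $E\cdot\lambda=-1$ and $-K_X\cdot\lambda=n-2$, and sections of $\pi|_E$ — one bounds the splitting type $(e_i)$ and the degree $-K_Y\cdot C$. This should cut the possibilities for $(C,N_{C/Y})$ down to a short finite list.

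\textbf{Step 3: reconstructing $Y$ and matching the list.} The key observation is that $Y$ is recovered from $X$ by contracting $E$ along $\pi|_E\colon E\to C$, while $X$ itself is recovered as the conic-bundle total space over $Z$ carrying $E$ as a distinguished birational section. I would organize the matching by $\rho(X)\in\{2,3\}$, equivalently $\rho(Y)\in\{1,2\}$: the case $\rho(Y)=1$ should produce $Y=Q_n$ with $C$ a line, while $\rho(Y)=2$ should split into the product $\PP^{1}\times\PP^{n-1}$, the two pairs on $\Bl_{\PP^{n-2}}(\PP^{n})$, and the scroll $\PP(\OO_{\PP^{1}}\oplus\OO_{\PP^{1}}(1)^{\oplus(n-1)})$, separated by the splitting type of $N_{C/Y}$ (and the sign of $-K_Y\cdot C$) and by how $E$ meets the remaining extremal faces of $X$. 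Forming $Y=\pi(X)$ in each admissible numerical type then yields precisely the pairs (1)--(5).

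\textbf{Main obstacle.} I expect the reconstruction-and-matching of Step 3 to be the hard part: the Fano constraints narrow $(C,N_{C/Y})$, but showing that each admissible numerical type is realized by exactly one listed geometric pair $(Y,C)$ — and excluding spurious numerical solutions — requires genuine classification input, for which I would lean on the structure theory of Fano manifolds with a conic-bundle fiber-type contraction from \cite{T1} to identify $Z$, and hence $X$ and $Y$, unambiguously. A secondary subtlety is that $\varphi$ need not be an honest $\PP^{1}$-bundle: it may acquire two-dimensional fibers over a proper subset of $Z$ (compatible with $\ell(R)=2$ when such a fiber is a $\PP^{2}$ or $Q_2$ on which $-K_X$ restricts to $\OO(2)$), and these loci are exactly where $\varphi|_E$ fails to be an isomorphism; tracking them is what distinguishes the smooth-blow-up and quadric cases from the product and scroll cases.
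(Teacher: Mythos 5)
Your Step 1 reproduces the paper's opening move: the fiber family is unsplit because $\ell(R)=2$, Proposition \ref{Ef=1} gives $\sharp(E\cap f)=1$, and reducedness of $E$ gives $E\cdot f=1$. Even here there is one slip: you invoke \cite{SW} or \cite{KMM} to declare $Z$ a smooth Fano manifold at the outset, but those results require $\varphi$ to be a $\PP^{1}$-bundle, which --- as you yourself concede at the end --- is not yet known at that stage; the paper only draws this conclusion after establishing that $\varphi|_{E}$ is finite, hence that every fiber of $\varphi$ is one-dimensional.

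The genuine gap is in Steps 2--3. You propose to bound the splitting type of $N_{C/Y}$ by intersection theory on $E$ and then ``match'' against the list, but the pair $(C,N_{C/Y})$ does not determine $Y$, and no computation on $E$ alone can reconstruct it; this is precisely the part that needs real classification input, and the one tool you name, \cite{T1}, addresses del Pezzo fibrations with $\dim Z=n-2$ (that is Proposition \ref{CR}), not the present conic-bundle case, so it cannot close the argument. The paper instead runs through a structural dichotomy that your sketch never engages: either $\varphi|_{E}$ contracts a curve, in which case adjunction shows $-K_{E}$ is ample, so $E\simeq\Bl_{\PP^{n-3}}(\PP^{n-1})$, hence $Z\simeq\PP^{n-1}$ and $\rho(Y)=1$, and a divisibility computation on $-K_{X}$ combined with Kobayashi--Ochiai \cite{KO} forces $Y\simeq Q_{n}$ with $C$ a line; or $\varphi|_{E}$ is finite, whence $\varphi$ is a $\PP^{1}$-bundle, $E\simeq Z$ is one of two explicit $\PP^{n-2}$-bundles over $\PP^{1}$, and the key Claim --- that $Y$ is a $\PP^{n-1}$-bundle over $\PP^{1}$ with $C$ a section --- is proved by applying the classification of \cite{BCW} to the Fano fibers $X_{a}$ of the composite $X\to Z\to C$, with \cite{Wis} Proposition 3.5 supplying case (5) when $Y$ fails to be Fano. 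None of these inputs (the finite/non-finite dichotomy, Kobayashi--Ochiai, the fiberwise use of \cite{BCW}, Wi\'sniewski's normal-bundle result) appears in your plan, so as written the proposal identifies the correct first step but does not contain a proof of the classification.
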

\begin{proof}
Let $f\simeq \PP^{1}$ be a general fiber of $\varphi$ 
such that $f\not\subset E$. Since 
$\ell (R)=2$, the family of rational curves containing the point $[f]$ is unsplit.
By Proposition \ref{Ef=1}, we have $\sharp(E\cap f)=1$. 
This implies $E\cdot f=1$ because the exceptional divisor $E$ is reduced.

We first consider the case where 
the restriction map $\varphi|_{E}:E\to Z$ is not finite, i.e. there exists a 
curve $\wtilde{C}$ contained in $E$ and contracted by 
$\varphi$. By definition of an extremal contraction, there 
exists $b\in\RR$ such that $\wtilde{C}\equiv bf$. 
Recall that $E$ is a $\PP^{n-2}$-bundle over the curve $C$. 
Since $\wtilde{C}$ is an exceptional curve, its numerical class 
generates an extremal ray. Since $\rho(E)=2$, we have  
$$
\NNE(E)=\RR[\wtilde{C}]+\RR[e]
$$ 
where $e$ is a line in a fiber of $\pi |_{E}:E\to C$.
Using the adjunction formula: $K_{E}=(K_{X}+E)|_{E}$ and the 
equality $E\cdot f=1$, we get
$$
-K_{E}\cdot e=-K_{X}\cdot e-E\cdot e=(n-2)+1=n-1>0
$$
and 
\begin{equation}\label{b}
-K_{E}\cdot \wtilde{C}=b(-K_{X}\cdot f-E\cdot f)=b(2-1)=b>0.
\end{equation}
By Kleiman's criterion, $-K_{E}$ is ample. Since the Fano manifold $E$ 
is a $\PP^{n-2}$-bundle over a curve and contains an exceptional curve, it is isomorphic to 
$\Bl_{\PP^{n-3}}(\PP^{n-1})$. 
Since $E\cdot f=1$, we see that $\varphi |_{E}:E\to Z$ is generically one to one 
onto the normal variety $Z$. Hence, the finite part of its Stein factorization is an isomorphism. 
It follows that $\varphi |_{E}:E\to Z$ coincides with the blow-up
$\Bl_{\PP^{n-3}}(\PP^{n-1})\to\PP^{n-1}$. Hence, 
$Z$ is isomorphic to $\PP^{n-1}$. We have also $\rho(X)=\rho(Z)+1=2$ and $\rho(Y)=1$.  
We observe that $\varphi_{*}e$ is a line in $Z\simeq \PP^{n-1}$. 
So, if we put $L:=\varphi^{*}\OO_{Z}(1)$, then $L\cdot e=1$. 
Since $-K_{X}\cdot e=n-2$ and $-K_{X}\cdot f=2$, we have 
$-K_{X}= nL+2E$. On the other hand, 
$$
-K_{X}=\pi^{*}(-K_{Y})-(n-2)E=r(Y)H-(n-2)E
$$
where $r(Y)$ is the index of $Y$ and $H$ is the pull back by $\pi$ 
of the ample generator of $\Pic(Y)\simeq \Z$. Thus, we have 
$$
r(Y)H=n(L+E).
$$
Since $(L+E)\cdot e=0$, there exists $D\in \Pic(Y)$ such that 
$L+E=\pi^{*}D$.  Note that we can write $\pi^{*}D=dH$ with 
$d\in \Z$. Hence, $r(Y)=nd$. 
By \cite{KO}, we have $r(Y)=n$ and $Y$ is isomorphic to $Q_{n}$. 
We have also $H\cdot f=1$. Now, we know that the curve $\wtilde{C}$ 
defined above is a fiber of the exceptional divisor of the blow-up 
$E\simeq \Bl_{\PP^{n-3}}(\PP^{n-1})\to\PP^{n-1}$. 
Since $-K_{E}\cdot \wtilde{C}=1$, we get $b=1$ from (\ref{b}), and 
we see that $\wtilde{C}$ is numerically equivalent to $f$. Hence, we have 
$$
\OO_{Y}(1)\cdot C=H\cdot \wtilde{C}=H\cdot f=1.
$$
It follows that $C$ is a line in $Y\simeq Q_{n}$. 
Hence, we get the example (1). 

Now, we consider the case where $\varphi |_{E}: E\to Z$ is finite. 
Note that every fiber of $\varphi$ is one-dimensional. 
Hence, by the assumption that $\ell (R)=2$,  
we see that $\varphi$ is a $\PP^{1}$-bundle 
(see \cite{Ando} and \cite{Wis} Theorem 1.2). 
By \cite{SW} Theorem 1.6 or \cite{KMM} Corollary 2.9, we conclude that 
$Z$ is a Fano manifold. 
Note that $\varphi |_{E}$ is an isomorphism because $E\cdot f=1$.  
Since $Z\simeq E$ has a structure of a 
$\PP^{n-2}$-bundle over the curve $C$, $Z$ is isomorphic to $\PP^{1}\times \PP^{n-2}$ or 
$\Bl_{\PP^{n-3}}(\PP^{n-1})$, and $C$ is isomorphic to $\PP^{1}$. 
\begin{claim}
$Y$ is a $\PP^{n-1}$-bundle over $\PP^{1}$.
\end{claim}
\begin{proof}
For a point $a\in C$, we put $E_{a}:=\pi^{-1}(a)$,  
$Z_{a}:=\varphi(E_{a})$,  
$X_{a}:=\varphi^{-1}(Z_{a})$ and $Y_{a}:=\pi (X_{a})$. 
Note that $X_{a}$ is smooth because it is a $\PP^{1}$-bundle over $Z_{a}\simeq\PP^{n-2}$. 
Hence, the divisor $Y_{a}\subset Y$ is smooth in codimension one.  
Thus, $Y_{a}$ is normal (\cite{Hartshorne} Ch. II Proposition 8.23).  
Note that $N_{E_{a}/X_{a}}\simeq \OO_{\PP^{n-2}}(-1)$. 
It follows that $\pi |_{X_{a}}: X_{a}\to Y_{a}$ is the blow-up 
at the point $C\cap Y_{a}$ with the exceptional divisor 
$E_{a}$. On the other hand, 
$\varphi |_{X_{a}}:X_{a}\to Z_{a}\simeq \PP^{n-2}$ is
a $\PP^{1}$-bundle. 
Consider the composite map $\Phi: X\to Z\simeq E\to C\simeq \PP^{1}$.
The fiber $X_{a}=\Phi^{-1}(a)$ is a Fano manifold. 
So, by the classification result due to \cite{BCW}, 
we conclude that $Y_{a}$ is isomorphic to $\PP^{n-1}$.
Consider the nef divisor $F:=\Phi^{*}\OO_{\PP^{1}}(1)$. 
We see that $F-K_{X}$ is ample. Hence, $H^{1}(X,\OO_{X}(F))=0$. 
From the exact sequence
$$
0\to \OO_{X}(F)\to \OO_{X}(F+E)\to \OO_{E}(F+E)\to 0, 
$$
we get 
$$
h^{0}(X,\OO_{X}(F+E))\geq h^{0}(X, \OO_{X}(F))
=h^{0}(\PP^{1},\OO_{\PP^{1}}(1))=2.
$$
Let $M$ be a general member of $|F+E|$ and we put $M':=\pi(M)$. 
Since $M\cdot f=(F+E)\cdot f=E\cdot f=1$, 
we have $M' |_{Y_{a}}\simeq \OO_{\PP^{n-1}}(1)$. 
Hence the morphism $\Phi\circ \pi^{-1}: Y\to \PP^{1}$ 
is a $\PP^{n-1}$-bundle and $C$ is a section. 
\end{proof}
If $Y$ is a Fano manifold, $Y$ is isomorphic to 
$\PP^{1}\times\PP^{n-1}$ or
$\Bl_{\PP^{n-2}}(\PP^{n})$. 
We first treat the case $Y\simeq \PP^{1}\times\PP^{n-1}$. 
Assume that $C$ is not a fiber of the projection 
$pr: \PP^{1}\times \PP^{n-2}\to \PP^{n-2}$. 
Let $\Gamma$ be a fiber of $pr$ meeting $C$ and 
$\wtilde{\Gamma}$ its strict transform by the blow-up $\pi: X\to Y$. 
Then, we have 
$$
K_{X}\cdot \wtilde{\Gamma}
=K_{Y}\cdot \Gamma+(n-2)E\cdot \wtilde{\Gamma}
\geq -2+(n-2)=n-4\geq 0, 
$$
which contradicts the assumption that $X$ is a Fano manifold.
It follows that $C$ is a fiber of the projection $pr$ and we get the 
example (2). 
Now, we consider the case $Y\simeq \Bl_{\PP^{n-2}}(\PP^{n})$. 
Let $G$ be the exceptional divisor of the blow-up 
$\beta: \Bl_{\PP^{n-2}}(\PP^{n})\to \PP^{n}$. 
Assume that $G\cdot C>0$. Let $g$ be a fiber of the $\PP^{1}$-bundle $G\to \PP^{n-2}$ such that 
$g\cap C\neq \varnothing$.
Then, we have 
$$
K_{X}\cdot \wtilde{g}=K_{Y}\cdot g+(n-2)E\cdot 
\wtilde{g}\geq -1+(n-2)=n-3> 0, 
$$
a contradiction. Hence, $G\cdot C\leq 0$. Since $C$ is a section of the 
$\PP^{n-1}$-bundle $Y\to \PP^{1}$, 
$C$ is either the strict transform by $\beta$ of a line in $\PP^{n}$
which does not meet the center $\PP^{n-2}$, 
or a fiber of the $\PP^{1}$-bundle $G\to \PP^{n-2}$. 
Thus, we get the examples (3) and (4).
 
If $Y$ is not a Fano manifold, by  
\cite{Wis} Proposition 3.5, we have $N_{C/Y}\simeq \OO_{\PP^{1}}(-1)^{\oplus(n-1)}$.
Hence, we conclude that 
$Y\simeq \PP(\OO_{\PP^{1}}\oplus \OO_{\PP^{1}}(1)^{\oplus(n-1)})$ 
and we get the example (5).
\end{proof}

{\it Remarks.} In dimension three, there is another example: $Y=\PP^{3}$ and $C$ is 
a rational curve of degree 3 (see \cite{MM} $n^{\circ}$ 27 in Table 2).
If we assume $i(X)\geq 2$, a similar statement is derived from \cite{AO} Theorem 1.3. 
In dimension four, if we assume $\rho(X)=2$ and 
$\varphi:X\to Z$ is a {\it scroll in the sense of adjunction theory} 
(see \cite{BS} for the definition), the example $(Y,C)=(Q_{4}, \mbox{line})$ 
is obtained from the list in \cite{Langer}.

\section{Proof of Theorem \ref{main}}\label{proof}

Let $X$ be a smooth Fano 4-fold. 
Let $R\subset \NNE(X)$ be the extremal ray 
defining the birational contraction $\pi:X\to Y$.
If $\ell(R)=1$, then $\ell(X)=i (X)=1$, hence we are done. 
So, it suffices to consider the case $\ell (R)\geq 2$. 
Recall that a contraction $\pi:X\to Y$ is called $(a,b)$-{\it type} 
if $\dim \Exc(\pi)=a$ and $\dim \pi (\Exc(\pi))=b$. 
By Fujita-Ionescu-Wi\'sniewski's inequality (see \cite{I} Theorem 0.4 and \cite{Wis} Theorem 1.1): 
$$
\dim (\mbox{non-trivial fiber of }\pi)
+\dim \Exc(\pi)\geq \dim X+\ell (R)-1, 
$$
we conclude that $\pi:X\to Y$ is either of type (3,0) or (3,1). 
By Proposition \ref{divisortopoint}, we have $i (X)=\ell (X)$ 
in the case of (3,0)-type. 

Now, assume that $\pi:X\to Y$ is a (3,1)-type contraction.
Since $\ell (R)\geq 2$, $\pi$ is a blow-up 
along a smooth curve $C$ and $Y$ is smooth (see \cite{Tk}).
Let $E$ be the exceptional divisor of $\pi$. By Lemma \ref{ER>0}, 
there exists an extremal ray $R'\subset \overline{\NE}(X)$ 
such that $E\cdot R'>0$. Let $\varphi:X\to Z$ be 
the associated contraction. Recall that the fiber of $\varphi$ 
has dimension at most 2. Since $\ell (R')\geq \ell (X)\geq 2$, 
$\varphi$ is one of the following:
\begin{enumerate} 
\item (3,1)-type: blow-up along a smooth curve and $Z$ is smooth; 
\item (4,2)-type: the general fiber of $\varphi$ is isomorphic to $\PP^{2}$ or $Q_{2}$; 
\item (4,3)-type: the general fiber of $\varphi$ is isomorphic to $\PP^{1}$ and forms an unsplit family. 
\end{enumerate}
The case (1) is excluded by \cite{T4} Proposition 5.
In the case (2), by Proposition \ref{CR}, we have 
$Y=\PP^{4}$ and $C$ is a line, or $Y=Q_4$ and $C$ is a conic 
(not on a plane contained in $Q_4$). 
Concerning the case (3), note that among 5 examples 
in Proposition \ref{p1}, the condition $\ell (X)\geq 2$ is satisfied 
only for the examples (1) and (2).
Summing up, we have the following possibilities:
\begin{enumerate}
\item $Y=\PP^{4}$ and $C$ is a line;
\item $Y=Q_{4}$ and $C$ is a conic;
\item $Y=Q_{4}$ and $C$ is a line;
\item $Y=\PP^{1}\times \PP^{3}$ and $C$ is a fiber 
of the projection $\PP^{1}\times \PP^{3}\to \PP^{3}$.
\end{enumerate}

Hence, to prove Theorem \ref{main}, it is sufficient 
to verify that the equality $i (X)=\ell (X)$ 
holds for $X=\Bl_{C}(Y)$ in the above four examples.
We just check the case (1), since the argument is similar in the other cases. 
Let $C$ be a line in $\PP^{4}$. 
Note that $X=\Bl_{C}(\PP^{4})$ has two extremal contractions:
the blow-up $\pi: X\to \PP^{4}$ along $C$ and 
the $\PP^{2}$-bundle $\varphi: X\to \PP^{2}$. The 
exceptional divisor $E=\Exc (\pi)$ is isomorphic to 
$\PP^{1}\times \PP^{2}$ and the restrictions $\pi |_{E}:E\to C\simeq \PP^{1}$ 
and $\varphi |_{E}: E\to \PP^{2}$ are the two natural projections. 
Let $e$ be a line in a fiber of $\pi |_{E}$ and $f$ be a fiber of $\varphi |_{E}$. 
Then, we have $\NNE(X)=\RR[e]+\RR[f]$. 
Note that $f$ is numerically equivalent to the strict transform by $\pi$ of 
a line meeting $C$. Since $-K_{X}\cdot e=2$ and 
$-K_{X}\cdot f=3$, we have $\ell (X)=2$.  
Let $\Gamma$ be an irreducible curve on $X$. 
Assume that $\Gamma\not\subset E$. Let $H$ be 
the pull back by $\pi$ of a hyperplane containing the line $C$ but 
not containing the curve $\pi (\Gamma)$. Then, we have 
$H\cdot \Gamma\geq E\cdot \Gamma$. Hence, 
$$
-K_{X}\cdot \Gamma =(\pi^{*}(-K_{\PP^{4}})-2E)\cdot \Gamma
=(5H-2E)\cdot \Gamma\geq 3H\cdot \Gamma\geq 3.
$$
If $\Gamma\subset E\simeq \PP^{1}\times\PP^{2}$, 
there exists natural numbers $a$ and $b$ such that $\Gamma\equiv 
ae+bf$. We have $-K_{E}\cdot \Gamma=3a+2b$, and 
$E |_{E}\cdot \Gamma=aE\cdot e+bE\cdot f=-a+b$. Hence, 
$$
-K_{X}\cdot \Gamma=(-K_{E}+E |_{E})\cdot \Gamma=(3a+2b)+(-a+b)=2a+3b\geq 2.
$$
Therefore, $i (X)\geq 2$. 
Thus, $i (X) =\ell (X)=2$. 
\qed
  
\
  
{\it Acknowledgements.}\  
The author would like to thank Professors Gianluca Occhetta and 
Hiroshi Sato for helpful comments.

\noindent -----------------------------------------

\noindent {\small Toru TSUKIOKA \ \ \ \ e-mail:\ tsukioka@las.osakafu-u.ac.jp\\ 
Faculty of Liberal arts and Sciences, 
Osaka Prefecture University \\
1-1\ Gakuen-cho\ Nakaku\ Sakai, 
Osaka 599-8531\ Japan
}

\end{document}